\documentclass[11pt]{amsart}
\usepackage{amsfonts,amssymb,amscd,amsmath,enumerate,verbatim}
\usepackage[latin1]{inputenc}
\usepackage{amscd}
\usepackage{latexsym}
\usepackage{pstcol,pst-plot,pst-3d}
\usepackage{mathptmx}
\usepackage{multicol}

\psset{unit=0.7cm,linewidth=0.8pt,arrowsize=2.5pt 4}

\newpsstyle{fatline}{linewidth=1.5pt}
\newpsstyle{fyp}{fillstyle=solid,fillcolor=verylight}
\definecolor{verylight}{gray}{0.97}
\definecolor{light}{gray}{0.9}
\definecolor{medium}{gray}{0.85}



%
%
%

%
%
\def\frk{\mathfrak}               

\def\Phi{{\frk N}}
%
%


%
\def\opn#1#2{\def#1{\operatorname{#2}}} 
%
\opn\chara{char} \opn\length{\ell} \opn\pd{pd} \opn\rk{rk}
\opn\projdim{proj\,dim} \opn\injdim{inj\,dim} \opn\rank{rank}
\opn\depth{depth} \opn\grade{grade} \opn\height{height}
\opn\embdim{emb\,dim} \opn\codim{codim}

\opn\Tr{Tr} \opn\bigrank{big\,rank}
\opn\superheight{superheight}\opn\lcm{lcm}
\opn\trdeg{tr\,deg}
\opn\reg{reg} \opn\lreg{lreg} \opn\ini{in} \opn\lpd{lpd}
\opn\size{size}\opn{\mult}{mult}
%
\opn\div{div} \opn\Div{Div} \opn\cl{cl} \opn\Cl{Cl}
%
%
\opn\Spec{Spec} \opn\Supp{Supp} \opn\supp{supp} \opn\Sing{Sing}
\opn\Ass{Ass} \opn\Min{Min}
%
%
\opn\Ann{Ann} \opn\Rad{Rad} \opn\Soc{Soc}
%
%
\opn\Syz{Syz} \opn\Im{Im} \opn\Ker{Ker} \opn\Coker{Coker}
\opn\Am{Am} \opn\Hom{Hom} \opn\Tor{Tor} \opn\Ext{Ext}
\opn\End{End} \opn\Aut{Aut} \opn\id{id} \opn\ini{in}

\opn\nat{nat}
\opn\pff{pf}
\opn\Pf{Pf} \opn\GL{GL} \opn\SL{SL} \opn\mod{mod} \opn\ord{ord}
\opn\Gin{Gin}
\opn\Hilb{Hilb}\opn\adeg{adeg}\opn\std{std}\opn\ip{infpt}
\opn\Pol{Pol}
\opn\sat{sat}
\opn\Var{Var}
\opn\Gen{Gen}

%
%
\opn\aff{aff} \opn\con{conv} \opn\relint{relint} \opn\st{st}
\opn\lk{lk} \opn\cn{cn} \opn\core{core} \opn\vol{vol}
\opn\link{link} \opn\star{star}
\opn\gr{gr}


%
%

\def\pot#1#2{#1[\kern-0.28ex[#2]\kern-0.28ex]}

%
%
\opn\dirlim{\underrightarrow{\lim}}
\opn\inivlim{\underleftarrow{\lim}}
%
%
%
\let\union=\cup
\let\sect=\cap

\let\tensor=\otimes
\let\iso=\cong
\let\Union=\bigcup
\let\Sect=\bigcap

%
%
\let\to=\rightarrow

\def\Implies{\ifmmode\Longrightarrow \else
        \unskip${}\Longrightarrow{}$\ignorespaces\fi}
\def\implies{\ifmmode\Rightarrow \else
        \unskip${}\Rightarrow{}$\ignorespaces\fi}
\def\iff{\ifmmode\Longleftrightarrow \else
        \unskip${}\Longleftrightarrow{}$\ignorespaces\fi}

\let\:=\colon
\newtheorem{Theorem}{Theorem}[section]
\newtheorem{Lemma}[Theorem]{Lemma}
\newtheorem{Corollary}[Theorem]{Corollary}
\newtheorem{Proposition}[Theorem]{Proposition}

\newtheorem{Definition}[Theorem]{Definition}

%
%
\let\epsilon\varepsilon
\let\phi=\varphi
\let\kappa=\varkappa
%
%
\textwidth=15cm \textheight=22cm \topmargin=0.5cm
\oddsidemargin=0.5cm \evensidemargin=0.5cm \pagestyle{plain}
%

\def\qed{\ifhmode\textqed\fi
      \ifmmode\ifinner\quad\qedsymbol\else\dispqed\fi\fi}
\def\textqed{\unskip\nobreak\penalty50
       \hskip2em\hbox{}\nobreak\hfil\qedsymbol
       \parfillskip=0pt \finalhyphendemerits=0}
\def\dispqed{\rlap{\qquad\qedsymbol}}

%
\opn\dis{dis}
\def\pnt{{\raise0.5mm\hbox{\large\bf.}}}

\opn\Lex{Lex}




\newcommand{\inD}[1][\relax]{\def\argone{#1}\def\temprelax{\relax}
  \ifx\argone\temprelax\right.\else\,\middle|#1\right.{}\fi}

\newif\ifbinary
\binarytrue

\begin{document}

\title{Cohen--Macaulay binomial edge ideals}

\author{Viviana Ene, J\"urgen Herzog and  Takayuki Hibi}
\subjclass{}

\address{Viviana Ene, Faculty of Mathematics and Computer Science, Ovidius University, Bd.\ Mamaia 124,
 900527 Constanta, Romania} \email{vivian@univ-ovidius.ro}

\address{J\"urgen Herzog, Fachbereich Mathematik, Universit\"at Duisburg-Essen, Campus Essen, 45117
Essen, Germany} \email{juergen.herzog@uni-essen.de}

\address{Takayuki Hibi, Department of Pure and Applied Mathematics, Graduate School of Information Science and Technology,
Osaka University, Toyonaka, Osaka 560-0043, Japan}
\email{hibi@math.sci.osaka-u.ac.jp}

\thanks{}

\begin{abstract}
We study the depth of classes of binomial edge ideals and classify all closed graphs whose binomial edge ideal is Cohen--Macaulay
\end{abstract}
\subjclass{13C05, 13C14, 13C15, 05E40, 05C25}
\maketitle

\section*{Introduction}
Binomial edge ideals were introduced in \cite{HHHKR}. They appear independently, and at about the same time, also in the paper \cite{Oh}. In simple terms, a binomial edge ideal is just an ideal generated by an arbitrary collection of $2$-minors of a $2\times n$-matrix whose entries are all
indeterminates. Thus the generators of such an ideal are of the form $f_{ij}=x_iy_j-x_jy_i$ with $i<j$. It is then natural to associate with such an ideal the graph $G$ on the vertex set $[n]$  for which $\{i, j\}$ is an edge if and only if $f_{ij}$ belongs to our ideal. This explains the naming for this type of ideals. The binomial edge ideal of graph $G$ is denoted by $J_G$. In \cite{HHHKR} the relevance of this class of ideals for algebraic statistics is explained.

The goal of this paper is to characterize Cohen--Macaulay binomial edge ideals for simple graphs with vertex set $[n]$. Similar to  ordinary edge ideals which were introduced by Villarreal \cite{V}, a general classification of Cohen--Macaulay binomial edge ideals seems to be hopeless. Thus we have to restrict our attention to special classes of graphs. In Section 1 we first consider the  class of chordal graphs with  the property that any two maximal cliques of it intersect in at most one vertex. These graphs include of course all forests. We show in Theorem~\ref{depthchordal} that  for these graphs we have  $\depth S/J_G=n+c$, where $n$ is the number of vertices of $G$ and $c$ is the number of connected components of $G$. As an application we show that the binomial edge ideal of a forest is Cohen--Macaulay if and only if each of its connected components  is a path graph, and this is the case if and only if $S/J_G$ is a complete intersection.

In Section 3 we  use the results of Section 2 to give in Theorem~\ref{classification} a complete characterization of all closed graphs whose binomial edge ideal is Cohen--Macaulay. Surprisingly this is the case if and only if its  initial ideal is Cohen--Macaulay. Even more is true: if for a closed graph $G$, the ideal $J_G$ is Cohen--Macaulay, then the graded Betti numbers of $J_G$ and its initial ideal coincide. For a closed graph whose binomial edge ideal is Cohen--Macaulay,  the Hilbert function and the multiplicity of $S/J_G$ can be easily computed. Then by using the associativity formula of multiplicities in combination with  the information given in \cite{HHHKR} concerning the minimal prime ideals of binomial edge ideals  we deduce  in Corollary~\ref{combformula} certain numerical identities.

The term ``closed graph" is not standard terminology in graph theory. It was introduced in \cite{HHHKR}  to characterize those graphs, which,  for certain labeling of their edges, do have a quadratic Gr\"obner basis  with respect to the lexicographic order induced by $x_1>\cdots >x_n>y_1>\cdots >y_n$. It is easy to see, as shown in \cite{HHHKR}, that any closed graph must be chordal. But
by far not all chordal graphs are closed. In Theorem~\ref{characterization} we give a description of the closed  graphs which is then used in the proof of  Theorem~\ref{classification}.

\section{Classes of chordal graphs with Cohen--Macaulay binomial edge ideal}

Recall that, by a result of Dirac \cite{D} (see also \cite{HH10}), a  graph $G$ is chordal if and only if it admits a {\em perfect elimination order}, that is, its vertices can be labeled $1,\ldots, n$ such that for all $j\in[n]$, the set $C_j=\{i\:\; i\leq j\}$ is a clique of $G$. A clique is simply a complete subgraph of $G$.

There is an equivalent characterization of chordal graphs in terms of its maximal cliques. To describe it we introduce some terminology.  Let $\Delta$ be a simplicial complex. A facet $F$ of $\Delta$ is called a {\em leaf}, if either $F$ is the only facet, or else there exists a facet $G$, called a {\em branch} of $F$, which intersects $F$ maximally. In other words, for each facet $H$ of $\Delta$ with $H\neq F$  one has $H\sect F\subset G\sect F$. Each leaf $F$ has at least one {\em free vertex}, that is, a vertex which belongs only to $F$. On the other hand, if a facet admits a free vertex it needs not to be a leaf.

The simplicial complex $\Delta$ is a called a {\em quasi-forest} if its facets can be ordered $F_1,\ldots,F_r$ such that for all $i>1$ the facet $F_i$ is a leaf of the simplicial complex with facets $F_1,\ldots, F_{i-1}$. Such an order of the facets is called a {\em leaf order}. A connected quasi-forest is called a {\em quasi-tree}.

Now let $G$ be a graph. The collection of cliques of $G$ forms a simplicial complex, called the {\em clique complex} of $G$. It is denoted  $\Delta(G)$. The equivalent statement to Dirac's theorem now says that $G$ is chordal if and only if $\Delta(G)$ is a quasi-forest.

In this section we will compute the depth of $S/J_G$ for  a very special class of chordal graphs.  This class includes all forests. As a consequence it will be shown that a forest has a Cohen--Macaulay binomial edge ideal if and only if all its components are path graphs.

We shall need a few results from \cite{HHHKR}. There in Corollary 3.9 and Corollary 3.3 the following fact is shown:  Suppose that $G$ is connected. Let $S\subset [n]$, and let $G_1,\ldots, G_{c_G(S)}$ be the connected components of $G_{[n]\setminus S}$. For each $G_i$ we denote by $\tilde{G}_i$ the complete graph on the vertex set $V(G_i)$. If there is no confusion possible we simply write $c(S)$ for $c_G(S)$, and set
\[
P_S(G)=(\Union_{i\in S}\{x_i,y_i\}, J_{\tilde{G_1}},\ldots, J_{\tilde{G}_{c(S)}}).
\]
Then $J_G=\Sect_{S\subset [n]}P_S(G)$, and $P_S(G)$ is a minimal prime ideal of $J_G$ if and only if $S=\emptyset$, or $S\neq\emptyset$ and for each $i\in S$ one has $c(S\setminus \{i\})<c(S)$. Moreover, $\height P_S(G)=n+|S|-c(S)$ and hence  $\dim S/J_G=\max\{(n-|S|)+c(S)\;\:\; S\subset [n]\}.$

\begin{Theorem}
\label{depthchordal}
Let $G$ be a  chordal graph on $[n]$ with the property that any two distinct maximal cliques intersect in  at most one  vertex. Then
$\depth S/J_G=n+c$, where $c$ is the number of connected components of $G$.

Moreover,  the following conditions are equivalent:
\begin{enumerate}
\item[{\em (a)}] $J_G$ is unmixed.
\item[{\em (b)}] $J_G$ is Cohen--Macaulay.
\item[{\em (c)}] Each vertex of $G$ is the intersection of at most two maximal cliques.
\end{enumerate}
\end{Theorem}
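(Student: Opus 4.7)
The strategy has two parts: first establish the depth formula, then use it to deduce the equivalences.

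First, reduce to the connected case: if $G = G_1 \sqcup \dots \sqcup G_c$ with connected components $G_i$, then $J_G = \sum_i J_{G_i}$ in disjoint variable sets, $S/J_G \cong \bigotimes_K S_i/J_{G_i}$, and depth is additive, so it suffices to show $\depth S/J_G = n + 1$ for $G$ connected. For the upper bound $\depth S/J_G \leq n+c$ in general, the prime decomposition recalled earlier shows that $P_\emptyset = (J_{\tilde{G}_1}, \ldots, J_{\tilde{G}_c})$ is always an associated prime of $J_G$, with $\height P_\emptyset = n - c$ and hence $\dim S/P_\emptyset = n + c$; since $\depth M \leq \dim S/\mathfrak{p}$ for every $\mathfrak{p} \in \Ass M$, the bound follows.

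For the lower bound with $G$ connected, I induct on the number $r$ of maximal cliques of $G$. The base case $r = 1$ is a complete graph, where $J_G$ is the generic $2 \times 2$ minor ideal of a $2 \times n$ matrix and is Cohen--Macaulay of dimension $n + 1$. For $r > 1$, since $\Delta(G)$ is a quasi-tree by chordality, I pick a leaf clique $F$ in a leaf order; the pairwise-intersection hypothesis forces $F \cap \bigcup_{F' \neq F} F' = \{v\}$ for a single vertex $v$. Setting $G'$ to be the induced subgraph on $V(G) \setminus (F \setminus \{v\})$ yields a connected graph in our class with $r-1$ maximal cliques, so by induction $\depth S'/J_{G'} = (n - m) + 1$ with $m = |F| - 1$. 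The step is then driven by the Mayer--Vietoris sequence
\[
0 \to S/(J_{G'} \cap J_F) \to S/J_{G'} \oplus S/J_F \to S/J_G \to 0,
\]
combined with $\depth S/J_{G'} = n + m + 1$ (after adjoining the $2m$ free variables for $F \setminus \{v\}$) and $\depth S/J_F = 2n - m$ (since $S_F/J_F$ is the Cohen--Macaulay determinantal ring of dimension $m + 2$). The depth lemma reduces the desired inequality $\depth S/J_G \geq n + 1$ to proving $\depth S/(J_{G'} \cap J_F) \geq n + 2$.

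This last estimate is the technical core of the argument and the step I expect to be the main obstacle. My plan is to show $J_{G'} \cap J_F = J_{G'} \cdot J_F$, equivalently $\Tor_1^S(S/J_{G'}, S/J_F) = 0$. The key point is that $J_{G'}$ and $J_F$ live in polynomial subrings of $S$ whose variable sets meet only in $\{x_v, y_v\}$, so this Tor reduces by flat base change to a Tor over $K[x_v, y_v]$, and the fact that $S_F/J_F$ is a domain should render the vanishing tractable. With the intersection identified with the product, the short exact sequence $0 \to J_G/J_F \to S/(J_{G'} \cap J_F) \to S/J_{G'} \to 0$ --- whose kernel is $J_{G'}/(J_{G'} \cap J_F) \cong J_G/J_F$ by the second isomorphism theorem --- lets one bootstrap the required depth bound and close the induction.

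For the equivalences, (b) $\Rightarrow$ (a) is standard. Graphs in our class are block graphs, so removing a vertex $v$ lying in $k$ maximal cliques splits its ambient component into exactly $k$ pieces. For (a) $\Rightarrow$ (c): if some $v$ were in $k \geq 3$ cliques, then $S = \{v\}$ would be admissible with $c(\{v\}) = c + k - 1 \geq c + 2$, giving $\height P_{\{v\}} = n + 1 - c(\{v\}) < n - c = \height P_\emptyset$ and contradicting unmixedness. Conversely, for (c) $\Rightarrow$ (b), an iterated block-cut-tree analysis shows that every admissible $S$ consists of degree-$2$ cut vertices and satisfies $c(S) - |S| = c$, so all minimal primes have height $n - c$; hence $\dim S/J_G = n + c = \depth S/J_G$ and $S/J_G$ is Cohen--Macaulay.
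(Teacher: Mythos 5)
Your reduction to the connected case, your upper bound via $\depth S/J_G\le\dim S/P_\emptyset(G)=n+1$, and your treatment of the equivalences (a)--(c) all match the paper and are fine. For the lower bound, however, you take a genuinely different route, and it has a real gap exactly where you flag it. The paper does not write $J_G=J_{G'}+J_F$; it splits the \emph{primary decomposition}, setting $Q_1=\bigcap_{i\notin S}P_S(G)$ and $Q_2=\bigcap_{i\in S}P_S(G)$, and then identifies $Q_1$, $Q_2$ and $Q_1+Q_2$ concretely as binomial edge ideals of smaller graphs in the same class ($Q_1=J_{G'}$ with the cliques through $i$ merged into one, $Q_2=(x_i,y_i)+J_{G''}$, $Q_1+Q_2=(x_i,y_i)+J_{G'}$), so that all three depths come from the induction hypothesis and the depth lemma applied to $0\to S/J_G\to S/Q_1\oplus S/Q_2\to S/(Q_1+Q_2)\to 0$ finishes the argument. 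Your Mayer--Vietoris sequence instead forces you to control $\depth S/(J_{G'}\cap J_F)$, and the two steps you propose for this do not hold up as written.

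First, the Tor vanishing. The base change $\Tor_1^S(S/J_{G'},S/J_F)\cong\Tor_1^{A}(B/J_{G'},C/J_F)$ with $A=K[x_v,y_v]$ is correct, but ``$S_F/J_F$ is a domain'' gives you essentially nothing over $A$: the ring $C/J_F$ is \emph{not} flat over $K[x_v,y_v]$ (the fibre of the rank-$\le 1$ locus over the origin jumps in dimension), and a domain can have plenty of torsion over a two-dimensional subring, so the vanishing does not follow from anything you cite. The identity $J_{G'}\cap J_F=J_{G'}J_F$ is, I believe, true --- it amounts to the Tor-independence of graphs glued at a simplicial vertex, established in later literature --- but it needs a genuine argument, e.g.\ via the primary decomposition or a Gr\"obner degeneration. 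Second, even granting that identity, your ``bootstrap'' is circular: the only way offered to bound $\depth(J_G/J_F)$ from below is the sequence $0\to J_G/J_F\to S/J_F\to S/J_G\to 0$, whose depth-lemma estimate requires a lower bound on $\depth S/J_G$ --- the very thing being proved. To make your route work you would need \emph{full} Tor-independence, $\Tor_k^S(S/J_{G'},S/J_F)=0$ for all $k>0$; but then Auslander--Buchsbaum gives $\depth S/J_G=\depth S/J_{G'}+\depth S/J_F-2n=n+1$ in one line and the Mayer--Vietoris detour is unnecessary. I would either prove full Tor-independence or switch to the paper's decomposition of the set of minimal primes.
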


\begin{proof}
Let $G_1,\ldots,G_c$ be the connected components of $G$ and set $S_i=K[\{x_j,y_j\}_{j\in G_i}]$. Then $S/J_G\iso S_1/J_{G_1}\tensor \cdots \tensor S_c/J_{G_c}$, so that $\depth S/J_G=\depth S_1/J_{G_1}+\cdots + \depth S_c/J_{G_c}$. Thus in order to prove the desired result, we may assume that $G$ is connected.

Let  $\Delta(G)$ be the clique complex of $G$ and let $F_1,\ldots,F_r$ be a leaf order on the facets of $\Delta(G).$ We make induction
on $r.$ If $r=1,$ then $G$ is a simplex and the statement is true. Let $r>1$; since $F_r$ is a leaf, there exists a unique vertex, say $i\in F_r$, such that $F_r\sect F_j=\{i\}$  for some $j$. Let $F_{t_1},\ldots,F_{t_q}$ be the
facets of $\Delta(G)$ which intersect the leaf $F_r$ in the vertex $i.$

Let ${\mathcal M}(G)$ denote the set of all sets $S\subset [n]$ such that $P_S(G)$ is a minimal prime ideal of $J_G$. We have  $J_G=Q_1\sect Q_2$ where  $Q_1=\Sect_{S\in \mathcal{M}(G),\; i\not\in S} P_S(G)$ and  $Q_2=\Sect_{S\in \mathcal{M}(G),\; i\in S} P_S(G)$.

Consider  the exact sequence
\begin{equation}\label{eqchordal}
0 \to S/J_G\to S/Q_1\oplus S/Q_2\to S/(Q_1+Q_2)\to 0.
\end{equation}
The ideal $Q_1$ is the binomial edge ideal associated with the graph $G^{\prime}$ which is obtained from $G$ by replacing the facets
$F_{t_1},\ldots,F_{t_q},$ and $F_r$ by the clique on the vertex set $F_r\union(\Union\limits_{j=1}^q F_{t_j}).$ Note that
$G^{\prime}$ is a connected chordal graph which has again the property that any two cliques intersect in at most  one vertex, and it has a smaller number of cliques than $G.$ Therefore, by induction, we have $\depth(S/Q_1)=\depth(S/J_{G^{\prime}})=n+1.$

In order to determine $Q_2$ we first observe that for all $S\subset [n]$ with $i\in S$ we have that $P_S(G)=(x_i,y_i)+P_{S\setminus\{i\}}(G'')$, where $G''$ is the restriction of $G$ to the vertex set $[n]\setminus\{i\}$. From  this we conclude that $Q_2=(x_i,y_i)+J_{G''}$. Let $S_i$ be the polynomial ring $S/(x_i,y_i)$. Then $S/Q_2\iso S_i/J_{G''}$. Hence, since $G''$ is a graph on $n-1$ vertices and with $q+1$ components satisfying the conditions of the theorem, our induction hypothesis implies that $\depth S/Q_2 =(n-1)+q+1=n+q$.

Next we observe that $Q_1+Q_2=J_{G'}+((x_i,y_i)+J_{G''})=(x_i,y_i)+J_{G'}$. Thus $S/(Q_1+Q_2)\iso S_i/J_H$ where $H$ is obtained form $G'$ by replacing the clique on the vertex set $F_r\union(\Union\limits_{j=1}^q F_{t_j})$ by the clique on the vertex set $F_r\union(\Union\limits_{j=1}^q F_{t_j})\setminus \{i\}$. Thus our induction hypothesis implies that $\depth S/(Q_1+Q_2)=n$. Hence the depth lemma applied to the exact sequence (\ref{eqchordal}) yields the desired conclusion concerning the depth of $S/J_G$.

For the proof of the equivalence of statements (a), (b), and (c), we may again assume that $G$ is connected. Let $J_G$ be unmixed. Then
$\dim(S/J_G)=n+1$ since $J_G$ has a minimal prime of dimension $n+1,$ namely $P_\emptyset(S).$ Since $\depth(S/J_G)=n+1$, it follows
that $J_G$ is Cohen-Macaulay, whence (a) \implies (b). The converse, (b) \implies (a), is well known.

(a) \implies (c): Let us assume that there is a vertex $i$ of $G$ where at least three cliques intersect. Then, for $S=\{i\}$,
 we get a  minimal prime $P_S(G)$ of $J_G$ of height strictly smaller than $n-1,$ which is in contradiction with the hypothesis on
 $J_G.$

(c) \implies (a): Let $\{i_1,\ldots,i_{r-1}\}$ be the intersection vertices of the maximal cliques of $G$, and  $P_S(G)$  a minimal
prime of $J_G$. Let $H_1,\ldots,H_t$ be the connected components of $G_{[n]\setminus S}.$ Suppose that there exists $i\in S\setminus
\{i_1,\ldots,i_{r-1}\}$. We have $c(S\setminus\{i\})<c(S)$. This implies that there exists $H_a, H_b$, two connected components of
$G_{[n]\setminus S},$ such that $i$ is connected to $H_a$ and $H_b.$ Let $u\in V(H_a)$ and $v\in V(H_b)$ such that $\{i,u\}$ and
$\{i,v\}$ are edges of $G.$ Since $i\in S\setminus \{i_1,\ldots,i_{r-1}\},$ it follows that $u,v$ and $i$ belong to the same clique
of $G$, which implies that $\{u,v\}$ is an edge of $G.$ Therefore, $H_a$ and $H_b$ are connected, a contradiction. By induction on the cardinality of $S$  we see that $c(S)=|S|+1$. Therefore, all the minimal primes of $J_G$ have the same height.
\end{proof}

As a consequence of Theorem~\ref{depthchordal} we obtain the following

\begin{Corollary}
\label{depthforest}
Let $G$ be a forest on the vertex set $[n].$ Then $\depth(S/J_G)=n+c,$ where $c$ is the number of the connected components of $G.$
Moreover, the following conditions are equivalent:
\begin{itemize}
	\item [(a)] $J_G$ is unmixed;
	\item [(b)] $J_G$ is Cohen-Macaulay;
	\item [(c)] $J_G$ is a complete intersection;
	\item [(d)] Each component of $G$ is a path graph.
\end{itemize}
\end{Corollary}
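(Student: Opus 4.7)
The plan is to derive Corollary 1.2 as a direct specialization of Theorem~\ref{depthchordal}, observing that a forest is a very restricted kind of chordal graph: its maximal cliques are precisely its edges (it has no triangles, since any triangle would be a cycle), and two distinct edges share at most one vertex. In particular the hypothesis on intersections of maximal cliques in Theorem~\ref{depthchordal} is automatically satisfied for any forest. So the depth formula $\depth(S/J_G)=n+c$ is immediate, and the equivalence (a) $\iff$ (b) is simply the corresponding equivalence in Theorem~\ref{depthchordal}.

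Next I would handle (a) $\iff$ (d). By Theorem~\ref{depthchordal}(c), unmixedness of $J_G$ is equivalent to the condition that each vertex of $G$ lies in at most two maximal cliques. Since the maximal cliques of a forest are its edges, this condition translates to each vertex having degree at most two. A forest in which every vertex has degree at most two has components which are either paths or cycles, and since a forest has no cycles, each component must be a path graph. Conversely, in a path graph every vertex has degree at most two.

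For the remaining implication (d) $\implies$ (c), and the trivial (c) $\implies$ (b), I would argue as follows. Suppose every component of $G$ is a path graph, with components $G_1,\ldots,G_c$ on $n_1,\ldots,n_c$ vertices respectively. Each $J_{G_i}$ is generated by the $n_i-1$ binomials corresponding to the consecutive edges of the path $G_i$, so $J_G$ is generated by $\sum_{i=1}^c(n_i-1)=n-c$ binomials. Using the formula $\dim S/J_G=\max\{(n-|S|)+c_G(S):S\subset[n]\}$ recalled before Theorem~\ref{depthchordal} (taking $S=\emptyset$, which certainly corresponds to a minimal prime), we obtain $\dim S/J_G\geq n+c$, and combining with $\depth S/J_G=n+c$ from the first part we conclude $\height J_G=n-c$. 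Thus the number of generators of $J_G$ equals its height, so $J_G$ is a complete intersection. The implication (c) $\implies$ (b) is standard.

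No step here is really hard; the only point that requires some care is being explicit about why, in a forest, ``maximal clique'' just means ``edge,'' and thus why condition (c) of Theorem~\ref{depthchordal} translates to the degree condition that forces each component to be a path. Once that dictionary is in place, everything else follows either from Theorem~\ref{depthchordal} or from an easy count of generators versus height.
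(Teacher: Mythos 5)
Your treatment of the depth formula, of (a) $\Leftrightarrow$ (b), and of (a) $\Leftrightarrow$ (d) matches the paper: all three are read off from Theorem~\ref{depthchordal} after observing that the maximal cliques of a forest are its edges (plus isolated vertices), so that condition (c) of that theorem becomes ``every vertex has degree at most two,'' which for an acyclic graph forces each component to be a path. Where you genuinely diverge is (d) $\Rightarrow$ (c). The paper reduces to a single path, computes $\ini_<(J_G)=(x_1y_2,\dots,x_{n-1}y_n)$, notes that these monomials involve pairwise disjoint variables and hence form a regular sequence, and concludes that $J_G$ is a complete intersection because its initial ideal is one. You instead count generators against height. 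That route works, but as written it has a logical slip: from $\dim S/J_G\geq n+c$ (via $S=\emptyset$) and $\depth S/J_G=n+c$ you \emph{cannot} conclude $\dim S/J_G=n+c$, since depth only bounds dimension from below; you have produced two lower bounds for $\dim$ and no upper bound, so the claimed equality $\height J_G=n-c$ does not follow from what you wrote.

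The fix is short and uses facts you have already established: given (d), the equivalences (d) $\Leftrightarrow$ (a) $\Leftrightarrow$ (b) give that $S/J_G$ is Cohen--Macaulay, hence $\dim S/J_G=\depth S/J_G=n+c$ and $\height J_G=n-c$, which equals the number $\sum_i(n_i-1)=n-c$ of generators; since $S$ is Cohen--Macaulay, an ideal of height $h$ generated by $h$ elements is generated by a regular sequence, so $J_G$ is a complete intersection. (Alternatively, unmixedness from (c) $\Rightarrow$ (a) of Theorem~\ref{depthchordal} already pins down all minimal primes to height $n-c$.) Note that this repaired argument makes (c) depend on (b), whereas the paper's Gr\"obner-basis argument establishes (d) $\Rightarrow$ (c) independently and then gets (b) and (a) for free from (c); either logical ordering is legitimate, but you should state yours explicitly. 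Also spell out the last step: ``number of generators equals height'' yields a complete intersection only because the ambient ring is Cohen--Macaulay.
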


\begin{proof} The implications (c)\implies (b)\implies (a) are obvious, while  (a )\implies (d) follows from Theorem~\ref{depthchordal}. For the proof of (d)\implies (c) we may assume that $G$ is a path, and the vertices are labeled in such a way such that $E(G)=\{ \{i,i+1\}\:\, i=1,\ldots,n-1\}$. Then $\ini_<(J_G)=(x_1y_2, x_2y_3,\ldots,x_{n-1}y_n)$, where $<$ is the lexicographic order induced by $x_1>x_2>\cdots >x_n>y_1>y_2>\cdots >y_n$. Since the initial ideal of $J_G$ is a complete intersection, $J_G$ itself is a complete intersection.
\end{proof}

The depth formula that we proved in Theorem~\ref{depthchordal} is not valid for arbitrary chordal graphs. For example for the graph $G$ displayed in Figure 1 we have $\depth S/J_G=5$ (and not $6$ as one would expect  by Theorem~\ref{depthchordal}). It is also an example of a graph for which $J_G$ is unmixed but not Cohen--Macaulay.

\begin{figure}[hbt]
\begin{center}
\psset{unit=1cm}
\begin{pspicture}(2.5,0.5)(7,2.5)
\pspolygon(3.4,2)(4.7,2)(4.7,1)
 \pspolygon(4.7,1)(4.7,2)(6,2)
\psline(4.7,1)(6,0.3)
 \psline(4.7,2)(6,0.3)
 \rput(3.4,2){$\bullet$} \rput(4.7,2){$\bullet$}
 \rput(4.7,1){$\bullet$} \rput(6,2){$\bullet$}
 \rput(6,0.3){$\bullet$}
\end{pspicture}
\end{center}
\caption{}\label{Fig1}
\end{figure}

\section{Closed graphs}

In  \cite{HHHKR} the concept of closed graphs was introduced. In that paper  a simple graph $G$  on the vertex set $[n]$ is called {\em closed with respect to the given labeling},  if the following condition is satisfied:

\begin{itemize}
\item For all   $\{i,j\}, \{k,l\}\in E(G)$ with $i<j$ and $k<l$ one has  $\{j,l\}\in E(G)$ if $i=k$ but $j\neq \ell$, and $\{i,k\}\in E(G)$ if $j=l$ but $i\neq k$.
\end{itemize}

The definition was motivated by the following result \cite[Theorem 1.1]{HHHKR}: $G$ is closed with respect to the given labeling,  if and only if $J_G$ has a quadratic Gr\"obner basis with respect to the  lexicographic order induced by $x_1>x_2>\cdots >x_n> y_1>\cdots >y_n$.

It is shown in \cite[Proposition 1.4]{HHHKR} that the graph $G$ on $[n]$ is closed  with respect to the given labeling,  if and only if for any two integers $1\leq i<j\leq n$ the shortest walk $\{i_1,i_2\}, \{i_2,i_3\},\ldots, \{i_{k-1}, i_{k}\}$ between $i$ and $j$ has the property that $i=i_1<i_2<\cdots < i_k=j$. In particular, for each $i<n$ one has that $\{i,i+1\}\in E(G)$.

\begin{Definition} We say a graph is {\em closed} if there exists a labeling for which it is closed.
\end{Definition}


It arises the question to characterize the closed graphs. It is  known from \cite[Proposition 1.2]{HHHKR} that if  $G$ is closed, then $G$ is chordal.

\begin{Theorem}
\label{characterization} Let $G$ be a graphs on $[n]$. The following conditions are equivalent:
\begin{enumerate}
\item[{\em (a)}] $G$ is closed;
\item[{\em (b)}] there exists a labeling of $G$ such that all facets of $\Delta(G)$ are intervals $[a,b]\subset [n]$.
\end{enumerate}
Moreover, if the equivalent conditions hold and the facets $F_1,\ldots,F_r$ of $\Delta(G)$ are labeled such that $\min(F_1)<\min(F_2)<\cdots <\min(F_r)$, then $F_1,\ldots,F_r$ is a leaf order of $\Delta(G)$.
\end{Theorem}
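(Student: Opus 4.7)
The plan is to prove (b) $\implies$ (a) by a direct check, and (a) $\implies$ (b) by first reducing to the connected case and then running a downward induction exploiting the closed property; the moreover statement then follows by bookkeeping on intervals.

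For (b) $\implies$ (a), I would fix a labeling making each facet of $\Delta(G)$ an interval and verify the closed condition on two edges $\{i,j\}, \{k,l\}$ with $i<j$, $k<l$. If $i=k$ and $j\neq l$, assume $j<l$ without loss of generality: the edge $\{i,l\}$ lies in some facet $[a,b]$ with $a\leq i<l\leq b$, so $j\in[a,b]$, and hence $\{j,l\}$ is an edge. The case $j=l$ with $i\neq k$ is analogous, with the roles of maximum and minimum swapped.

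For (a) $\implies$ (b), I would start from a labeling that makes $G$ closed. After permuting labels so that each connected component occupies a contiguous block of integers (this preserves the closed property, because closed conditions between vertices in different components are vacuous), I may assume $G$ is connected, so that by \cite[Proposition 1.4]{HHHKR}, $\{i, i+1\}\in E(G)$ for every $i\in [n-1]$. The key step is to show that for every vertex $i<n$, the set $N_{>}(i)=\{k>i\:\; \{i,k\}\in E(G)\}$ equals $[i+1, m_i]$ with $m_i=\max N_{>}(i)$. The closed condition immediately makes $N_{>}(i)$ a clique (any two elements of $N_{>}(i)$ share the smaller vertex $i$), and to see that this clique fills the entire interval I would run a downward induction from $m_i$: given $\{i, k+1\}\in E(G)$, the closed condition applied to $\{i,k+1\}$ and $\{k,k+1\}$ (same maximum $k+1$) yields $\{i, k\}\in E(G)$. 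Consequently $\{i\}\union N_{>}(i)=[i,m_i]$ is a clique, and it is the unique maximal clique of $G$ whose minimum vertex is $i$, so every facet of $\Delta(G)$ is an interval.

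For the moreover statement, list the facets as $F_i=[a_i,b_i]$ with $a_1<\cdots <a_r$; maximality of the facets forbids containment, forcing $b_1<b_2<\cdots <b_r$ as well. For $i>1$ and $k<i$ the intersection $F_k\sect F_i$ equals $[a_i,b_k]$ when $a_i\leq b_k$ and is empty otherwise, and in either case $F_k\sect F_i\subseteq F_{i-1}\sect F_i$ because $b_k\leq b_{i-1}$. Hence $F_{i-1}$ is a branch of $F_i$ in the complex with facets $\{F_1,\ldots,F_i\}$ whenever $F_i$ meets some earlier facet; otherwise $F_i$ is trivially a leaf of that complex. This exhibits the required leaf order. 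The main subtlety I anticipate is the downward induction showing $N_{>}(i)=[i+1,m_i]$, because it requires orchestrating the closed condition on ``same-maximum'' edge pairs together with the consecutive-edge fact $\{i,i+1\}\in E(G)$ that becomes available only after the reduction to the connected case.
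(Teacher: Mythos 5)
Your proof is correct, but it takes a genuinely different route from the paper's for the main implication. For (a)$\implies$(b) the paper argues by induction on $n$: it shows that the neighborhood $F=\{j:\{j,n\}\in E(G)\}$ of the top vertex is an interval $[k,n]$ and a facet, that every other facet meets $F$ in an initial segment $[k,\ell]$ (so $F$ is a leaf with an explicit branch $H$ with $H\cap F=[k,\ell]$), and then restricts to $G_\ell$ on $[\ell]$ and invokes the induction hypothesis. You instead characterize all facets simultaneously: the forward neighborhood $N_{>}(i)$ is the interval $[i+1,m_i]$ by a downward induction on same-maximum pairs $\{i,k+1\},\{k,k+1\}$, so every maximal clique with minimum $i$ equals $[i,m_i]$. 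The interval-filling trick is the same in spirit (the paper propagates upward along the neighborhood of $n$ using same-minimum pairs; you propagate downward using same-maximum pairs), and both rest on the same external input, namely $\{i,i+1\}\in E(G)$ from \cite[Proposition 1.4]{HHHKR} in the connected case; but your version avoids the induction on the vertex set and gives the facet structure directly. You are also more careful on two points the paper glosses over: the reduction of the disconnected case by an order-preserving relabeling within components (the paper's choice of $F$ tacitly assumes $n$ is not isolated), and the explicit verification of the leaf order in the ``moreover'' part ($b_1<\cdots<b_r$ by maximality of facets, hence $F_k\cap F_i\subseteq[a_i,b_{i-1}]= F_{i-1}\cap F_i$), which the paper declares obvious. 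Your (b)$\implies$(a) is essentially the paper's argument, phrased more accurately: rather than asserting that the two edges lie in a common facet, you place the intermediate vertex inside the interval facet containing the longer edge, which is what the argument actually requires.
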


\begin{proof} (a)\implies (b): Let $G$ be a closed graph on $[n]$ and  $F=\{j\:\; \{j,n\}\in E(G)\}$, and let $k=\min\{j\:\; j\in F\}$. Then $F=[k,n]$. Indeed, if $j\in F$ with $j<n$, then as observed above, it follows that $\{j,j+1\}\in E(G)$, and then because $G$ is closed we see that since  $\{j,n\}\in  E(G)$, then also $\{j+1,n\}\in  E(G)$.  Thus $j+1\in F$.

Next observe that $F$ is a maximal clique of $G$, that is, a facet of $\Delta(G)$. First of all it is a clique, because $i,j\in F$ with $i<j<n$, then, since $\{i,n\}$ and $\{j,n\}$ are edges of $G$, it follows that $\{i,j\}$ is an edge as well, since $G$ is closed. Secondly, it is maximal, since $\{j,n\}\not\in E(G)$, if $j\not\in F$.

Let $H\neq F$ be a facet of $\Delta(G)$ with $H\sect F\neq \emptyset$, and let $\ell=\max\{j\:\; j\in H\sect F\}$. We claim that $H\sect F=[k, \ell]$. There is nothing to prove if $k=\ell$.  So now suppose that $k<\ell$ and let $k\leq t<\ell$  and $s\in H\setminus F$. Then $s,t<\ell$ and $\{s,\ell\}$ and  $\{t,\ell\}$ are edges of $G$. Hence since $G$ is closed it follows that $\{s,t\}\in E(G)$. This implies that $s\in H$, as desired.

It follows from the claim that the facet $H$ for which $\max\{j\:\; j\in H\sect F\}$ is maximal, is a branch of $F$. In particular,  $F$ is a leaf.
Let $H\sect F=[k, \ell]$, where $H$ is a branch of $F$, and denote by $G_\ell$  the restriction of $G$ to $[\ell]$. Since $G_\ell$ is again closed and since $\ell<n$, we may assume, by applying induction on the cardinality of the vertex set of $G$, that all facets of $\Delta(G_\ell)$ are intervals. Now let $F'$ be any facet of $\Delta(G)$. If $F=F'$, then $F$ is an interval, and if $F\neq F'$, then, as we have seen above, it follows that $F'\in \Delta(G')$. This yields the desired conclusion.

(b)\implies (a): Let $\{i,j\}$ and $\{k,\ell\}$ be edges of $G$ with $i<j$ and $k<\ell$. If $i=k$, then  $\{i,k\}$ and $\{i,\ell\}$ belong to the same maximal clique, that is, facet of $\Delta(G)$ which by assumption is an interval. Thus if $j\neq \ell$, then $\{j,\ell\}\in E(G)$. Similarly one shows that if $j=\ell$, but $i\neq k$, then $\{i,k\}\in E(G)$. Thus $G$ is closed.

Finally it is obvious that the facets of $\Delta(G)$ ordered according to their minimal elements is a leaf order, because for this order $F_{i-1}$ has maximal intersection with $F_i$ for all $i$.
\end{proof}

\section{Closed graphs with Cohen--Macaulay binomial edge ideal}

With the description of closed graphs given in Theorem~\ref{characterization} it is not hard to classify all closed graphs with Cohen--Macaulay binomial edge ideal.

\begin{Theorem}
\label{classification}
Let $G$ be a connected graph on $[n]$ which is closed with respect to the given labeling. Then the following conditions are equivalent:
\begin{itemize}
\item [(a)] $J_G$ is unmixed;
\item [(b)] $J_G$ is Cohen-Macaulay;
\item [(c)] $\ini_< (J_G)$ is Cohen-Macaulay;
\item [(d)] $G$ satisfies the condition that whenever $\{i,j+1\}$ with $i<j$ and $\{j,k+1\}$ with $j<k$ are edges of $G$, then
	$\{i,k+1\}$ is an edge of $G;$
\item [(e)] there exist integers $1=a_1<a_2<\cdots <a_r<a_{r+1}=n$ and a leaf order of the facets $F_1,\ldots,F_r$ of $\Delta(G)$ such that $F_i=[a_i,a_{i+1}]$ for all $i=1,\ldots,r$.
\end{itemize}
\end{Theorem}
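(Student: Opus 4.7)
The plan is to prove the cycle $(\text{c}) \Rightarrow (\text{b}) \Rightarrow (\text{a}) \Rightarrow (\text{e}) \Rightarrow (\text{c})$ and to handle $(\text{d}) \Leftrightarrow (\text{e})$ as a straightforward side check. The implications $(\text{c}) \Rightarrow (\text{b})$ and $(\text{b}) \Rightarrow (\text{a})$ are standard: the first follows from $\depth S/J_G \geq \depth S/\ini_<(J_G)$ together with the equality of Hilbert functions (hence dimensions) of the two quotients, and the second is the trivial fact that Cohen--Macaulay implies unmixed. For $(\text{d}) \Leftrightarrow (\text{e})$, Theorem~\ref{characterization} identifies the facets of $\Delta(G)$ with intervals $F_i = [a_i, b_i]$ listed in leaf order by minimum; condition (d) then translates to the requirement that no two distinct facets share two consecutive vertices, which for intervals is exactly $b_i = a_{i+1}$, while connectedness together with maximality forces $1 = a_1$ and $b_r = n$.

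For $(\text{a}) \Rightarrow (\text{e})$, use Theorem~\ref{characterization} to write the facets as $F_i = [a_i, b_i]$ with $a_1 < \cdots < a_r$; maximality also gives $b_1 < \cdots < b_r$, and connectedness gives $a_{i+1} \leq b_i$. Suppose for contradiction that $b_i > a_{i+1}$ for some $i$, and set $S = [a_{i+1}, b_i]$, an interval of cardinality $\geq 2$. By strict monotonicity of the $a_j$ and $b_j$, every facet $F_j$ with $j < i$ is contained in $[1, b_i - 1]$ and every $F_j$ with $j > i+1$ is contained in $[a_{i+1}+1, n]$, so no facet other than $F_i$ and $F_{i+1}$ has vertices on both sides of $S$; hence $G_{[n] \setminus S}$ has exactly two connected components, namely $[1, a_{i+1}-1]$ and $[b_i+1, n]$, and $c(S) = 2$. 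Each $v \in S$ lies in both $F_i$ and $F_{i+1}$, so putting $v$ back reconnects the two halves and $c(S \setminus \{v\}) = 1 < 2$, making $P_S(G)$ a minimal prime of height $n + |S| - c(S) = n + (b_i - a_{i+1}) - 1 \geq n$. But $\height P_\emptyset(G) = n - 1$, contradicting unmixedness.

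The main obstacle is $(\text{e}) \Rightarrow (\text{c})$: showing that $\ini_<(J_G) = (x_uy_v : u < v,\ \{u,v\} \in E(G))$ is Cohen--Macaulay. Under (e) this decomposes as $\ini_<(J_G) = \sum_{k=1}^r I_k$, where $I_k = (x_uy_v : a_k \leq u < v \leq a_{k+1})$ is the initial ideal of the ideal of $2$-minors of the generic $2\times(a_{k+1}-a_k+1)$ matrix on the columns $[a_k, a_{k+1}]$ (itself classically known to be Cohen--Macaulay), and consecutive pieces $I_k, I_{k+1}$ share only the variables $x_{a_{k+1}}, y_{a_{k+1}}$. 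My plan is to induct on $r$: writing $J' = \sum_{k<r} I_k$ for the ideal associated with $G'$ on $[a_r]$, analyze
\[
0 \to S/(J' \sect I_r) \to S/J' \oplus S/I_r \to S/(J' + I_r) \to 0
\]
via the depth lemma. The summand $S/J'$ is Cohen--Macaulay by induction (the variables $x_v, y_v$ for $v > a_r$ form a regular sequence modulo $J'$), $S/I_r$ is Cohen--Macaulay by the classical generic-matrix result, and $S/(J' + I_r)$ should reduce, modulo $(x_{a_r}, y_{a_r})$, to a smaller instance of the same statement. I expect the delicate point to be a clean description of the intersection $J' \sect I_r$ around the glue vertex $a_r$, and showing that its depth exceeds that of $S/J' \oplus S/I_r$ by exactly one so that the depth lemma yields $\depth S/\ini_<(J_G) = n+1 = \dim S/\ini_<(J_G)$.
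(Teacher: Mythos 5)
Your overall architecture is fine and most of the cycle tracks the paper closely. Your argument for (a)\,\implies\,(e) is essentially the paper's: the paper takes $S=[a_r,b_{r-1}]$ and inducts on $r$, while you pick an arbitrary offending pair and check directly that $P_S(G)$ is a minimal prime of height at least $n$ (a point the paper leaves implicit), which works. The implications (c)\,\implies\,(b)\,\implies\,(a) and the direct check (d)\,\iff\,(e) are unproblematic. The one place where real work is required and where your proposal does not deliver is the implication into (c): the paper obtains (d)\,\implies\,(c) by invoking the proof of \cite[Proposition 1.6]{HHHKR}, whereas you attempt a direct proof of (e)\,\implies\,(c) and explicitly leave its key step open (``I expect the delicate point to be a clean description of the intersection $J'\cap I_r$\dots''). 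Since the Cohen--Macaulayness of $\ini_<(J_G)$ is exactly the nontrivial content here, this is a genuine gap as written.

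Moreover, your Mayer--Vietoris setup rests on a miscount. The minimal generators of $I_k=(x_uy_v:\ a_k\le u<v\le a_{k+1})$ involve only the variables $x_{a_k},\dots,x_{a_{k+1}-1}$ and $y_{a_k+1},\dots,y_{a_{k+1}}$, so $I_k$ and $I_{k+1}$ share \emph{no} variables at all: $x_{a_{k+1}}$ occurs only in generators of $I_{k+1}$ and $y_{a_{k+1}}$ only in generators of $I_k$. This error actually works in your favor. Because the $I_k$ live in pairwise disjoint sets of variables, $S/\ini_<(J_G)$ is a tensor product over $K$ of the rings $K[x_{a_k},\dots,x_{a_{k+1}-1},y_{a_k+1},\dots,y_{a_{k+1}}]/I_k$ together with the two free variables $x_n$ and $y_1$, so depths add. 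Each factor is Cohen--Macaulay of depth $a_{k+1}-a_k$ (the initial ideal of the $2$-minors of a generic $2\times m$ matrix in the given diagonal order is classically Cohen--Macaulay of dimension $m+1$; deleting the two variables not appearing in its generators leaves depth $m-1$), whence
$\depth S/\ini_<(J_G)=\sum_{k=1}^r(a_{k+1}-a_k)+2=n+1=\dim S/J_G$. This disjoint-variables observation is precisely what the paper exploits in Proposition~\ref{betticm}; with it, your short exact sequence and the ``delicate'' intersection $J'\cap I_r$ become unnecessary. Repair (e)\,\implies\,(c) along these lines, or cite the proof of \cite[Proposition 1.6]{HHHKR} for (d)\,\implies\,(c) as the paper does, and the argument is complete.
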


\begin{proof}
We begin by proving (a) $\implies$ (e). By Theorem~\ref{characterization}, $\Delta(G)$ has facets  $F_1,\ldots,F_r$ where each facet is an interval. We may order the intervals $F_i=[a_i,b_i]$  such that $1=a_1<a_2<\cdots <a_r\leq b_r=n$. Since $G$ is connected it follows that $a_{i+1}\leq b_i$ for all $i$. Let $S=[a_r,b_{r-1}]$; then $c(S)=2$, and so height $P_S(G)=n+(b_{r-1}-a_r+1)-2=n+(b_{r-1}-a_r)-1$. On the other hand, $\height P_\emptyset(G)=n-1$, since $G$ is connected. Thus our assumption implies that  $n+(b_{r-1}-a_r)-1= n-1$ which implies that $b_{r-1}=a_r$. Let $G'$ be the graph whose clique complex  $\Delta(G')$ has the facets $F_1,\ldots,F_{r-1}$. Let $P_S(G')$ be a minimal prime ideal of $G'$. Then $b_{r-1}\not\in S$. Therefore, $c_{G'}(S)=c_G(S)$, and hence $P_S(G)$ is a minimal prime ideal of $J_G$ of same height as $P_S(G')$. Thus we conclude that $J_{G'}$ is unmixed as well. Induction on $r$ concludes the proof.

In the sequence of implications (e) $\implies$ (d) $\implies$ (c) $\implies$ (b) $\implies$ (a), the second follows from the proof of \cite[Proposition
1.6.]{HHHKR}, and the third and the fourth are well known for any ideal.

We prove (e) $\implies$ (d). Let $i<j<k$ be three vertices of $G$ such that $\{i,j+1\}$ and $\{j,k+1\}$ are edges of $G.$ Then $i$ and $j+1$
belong to the same facet of $\Delta(G)$, let us say to $F_{\ell}$. Then $k+1$ must belong to $F_{\ell}$ as well  since it is adjacent to $j.$ Therefore, the condition from (d) follows.
\end{proof}

Closed graphs with Cohen-Macaulay binomial edge ideal  have the following nice property.

\begin{Proposition}
\label{betticm}
Let $G$ be a closed graph with Cohen--Macaulay binomial edge ideal. Then $\beta_{ij}(J_G)=\beta_{ij}(\ini(J_G))$ for all $i$ and $j$.
\end{Proposition}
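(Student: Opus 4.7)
The approach combines upper semi-continuity of graded Betti numbers under Gr\"obner degenerations with the Cohen--Macaulay structure afforded by Theorem~\ref{classification}. For any term order one has $\beta_{i,j}^S(J_G) \leq \beta_{i,j}^S(\ini_<(J_G))$, so only the reverse inequality remains. Since both rings are Cohen--Macaulay by Theorem~\ref{classification} and share a Hilbert function, they have the same projective dimension $p$. Peeva's consecutive-cancellation principle then yields non-negative integers $c_{i,j}$ with
\[
\beta_{i,j}^S(\ini_<(J_G)) - \beta_{i,j}^S(J_G) = c_{i-1,j} + c_{i,j},
\]
and the proposition becomes the statement that every $c_{i,j}$ vanishes. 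The Conca--Varbaro theorem on extremal Betti numbers for squarefree Gr\"obner degenerations handles the top row: every nonzero $\beta_{p,j}^S(\ini_<(J_G))$ is extremal, hence equals $\beta_{p,j}^S(J_G)$, so $c_{p,j}=0$.

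To cascade this equality downward I would induct on the number $r$ of facets of $\Delta(G)$ in description (e) of Theorem~\ref{classification}. In the base case $r=1$, $G=K_n$ and $J_G=I_2(X)$ for a generic $2\times n$ matrix; the Eagon--Northcott complex gives $\beta_{i,i+1}(S/J_G)=i\binom{n}{i+1}$, and Hochster's formula applied to the (shellable) Stanley--Reisner complex of the staircase ideal $\ini_<(J_G)=(x_iy_j:1\leq i<j\leq n)$ produces the same values.

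For the inductive step $r>1$, I would apply the short exact sequence~(\ref{eqchordal}) from the proof of Theorem~\ref{depthchordal} at the vertex $a_r$ shared by the last two facets. The three quotients $S/Q_1,\ S/Q_2,\ S/(Q_1+Q_2)$ correspond, after quotienting by the regular sequence $x_{a_r},y_{a_r}$ where appropriate, to closed Cohen--Macaulay binomial edge ideals with strictly fewer facets, and their Betti numbers coincide with those of their initial ideals by induction. One then builds the analogous exact sequence for $\ini_<(J_G)$ and compares the two long Tor sequences bidegree by bidegree to force the remaining $c_{i,j}$ to vanish.

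\textbf{Main obstacle.} The principal technical point is verifying that Gr\"obner degeneration is compatible with the Mayer--Vietoris decomposition: one must show $\ini_<(Q_1)\cap\ini_<(Q_2)=\ini_<(Q_1\cap Q_2)$ and similarly for the sum, so that the exact sequence~(\ref{eqchordal}) degenerates flatly to the corresponding sequence for $\ini_<(J_G)$. This compatibility ultimately rests on the explicit quadratic Gr\"obner basis of $J_G$ afforded by the closedness of $G$ and on the fact that the vertex split at $a_r$ respects the lexicographic order, but carrying out the bookkeeping across the three ideals, together with ensuring the long Tor sequence for $J_G$ has the same ranks in each bidegree as the one for $\ini_<(J_G)$, is the delicate part of the argument.
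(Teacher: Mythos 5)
Your proposal diverges substantially from the paper's argument, and as it stands it has genuine gaps. The decisive observation in the paper, which your sketch misses, is that no Mayer--Vietoris or cancellation machinery is needed at all: writing $\ini_<(J_G)=(\Union_{k=1}^rM_k)$ with $M_k=\{x_iy_j\:\; a_k\leq i<j\leq a_{k+1}\}$, the sets $M_k$ involve \emph{pairwise disjoint} sets of variables (the $x$-indices of $M_k$ stop at $a_{k+1}-1$ and its $y$-indices start at $a_k+1$, so consecutive blocks do not overlap even though the cliques share the vertex $a_{k+1}$). Hence $\Tor_k(S/(M_i),S/(M_j))=0$ for $k>0$, the Betti polynomial of $S/\ini_<(J_G)$ factors as a product, the same Tor-vanishing transfers to the $S/J_{F_i}$ by semicontinuity, and everything reduces to a single clique. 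That case is settled by Lemma~\ref{linear} and Fr\"oberg's theorem: both $J_{K_n}$ and its initial ideal have $2$-linear resolutions, so their common Hilbert function forces equal Betti numbers. Your base case $r=1$ via Eagon--Northcott and Hochster's formula is a correct alternative for that step.

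The problems with your route are these. First, the assertion that every nonzero $\beta_{p,j}(\ini_<(J_G))$ in the top homological degree is extremal is false in general: an extremal Betti number is a corner of the Betti table, so in homological degree $p$ only the entry of maximal internal degree qualifies. (In the present situation the top row does happen to be concentrated in a single degree, but proving that already requires knowing the product structure of the resolution, i.e., the paper's argument.) Second, even granting $c_{p,j}=0$, consecutive cancellation does not cascade downward: the relation $\beta_{i,j}(\ini_<(J_G))-\beta_{i,j}(J_G)=c_{i,j}+c_{i+1,j}$ leaves $c_{p-1,j}$ completely unconstrained, so the extremal-Betti input does no work and the entire burden falls on your induction. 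Third, the induction itself rests on the unproved compatibility $\ini_<(Q_1)\sect\ini_<(Q_2)=\ini_<(Q_1\sect Q_2)$ and its analogue for the sum, which you correctly flag but do not resolve; and even with that compatibility, comparing the two long exact Tor sequences bidegree by bidegree yields alternating-sum relations, not the termwise equalities you need, unless the sequences split. I would recommend abandoning the exact sequence~(\ref{eqchordal}) here and exploiting the disjointness of variables among the blocks $M_k$ directly.
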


\begin{proof}
For a graded $S$-module $W$ we denote by $B_W(s,t)=\sum_{i,j}\beta_{ij}(W)s^it^j$ the Betti polynomial of $W$.

Since $\ini(J_G)$ is Cohen--Macaulay, it follows from Theorem~\ref{classification} that $[n]=\Union_{k=1}^r[a_k,a_{k+1}]$ with $1=a_1 <a_2<\cdots <a_r<a_{r+1}=n$ and  such that each $F_k\:=G_{[a_k, a_{k+1}]}$ is a clique. It follows that $\ini(J_G)$ is minimally generated by the set of monomials $\Union_{k=1}^rM_k$ where $M_k=\{x_iy_j\:\; a_k\leq i<j\leq a_{k+1}\}$ for all $k$. Since for all $i\neq j$ the  monomials of $M_i$ and $M_j$ are monomials in disjoint sets of variables, it follows that $\Tor_k(S/(M_i), S/(M_j))=0$ for all $i\neq j$ and all $k>0$. Form this we conclude that
\[
B_{S/\ini(J_G)}(s,t)=\prod_{i=1}^rB_{S/(M_i)}(s,t).
\]
Since $\Tor_k(S/(M_i), S/(M_j))=0$ for all $k>0$, and  since $\ini_<(J_{F_i})=(M_i)$  for all $i$, we see  that $\Tor_k(S/J_{F_i}, S/J_{F_j})=0$ for all $k>0$ as well. Thus we have
\[
B_{S/J_G}(s,t)=\prod_{i=1}^rB_{S/J_{F_i}}(s,t).
\]
Hence it remains to be shown that if $G$ is a clique, then $\beta_{ij}(J_G)=\beta_{ij}(\ini(J_G))$ for all $i$ and $j$.
By the subsequent Lemma~\ref{linear} and by  Fr\"oberg's theorem \cite{F} we have that $\ini_<(J_G)$ has a $2$-linear resolution. Therefore $J_G$ has a $2$-linear resolution as well. Thus for $J_G$ and for $\ini_<(J_G)$, the Hilbert function of the ideal determines the Betti numbers. It is well-known that $S/\ini_<(J_G)$ and $S/J_G$ have the same Hilbert function. Hence we conclude that the  (graded) Betti numbers of $J_G$ and $\ini_<(J_G)$ coincide.
\end{proof}

\begin{Lemma}
\label{linear}
Let $G$ be a finite bipartite graph on
$\{x_1, \ldots, x_n\} \cup \{y_n, \ldots, y_n\}$
with the edges $\{x_i, y_j\}$ with $1 \leq i \leq j \leq n$.
Then the complementary graph $\bar{G}$
of $G$ is a chordal graph.
\end{Lemma}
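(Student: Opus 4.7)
The plan is to prove $\bar G$ is chordal by exhibiting a perfect elimination ordering on its vertices. I first describe $\bar G$ explicitly: since $G$ is bipartite, $\bar G$ contains the complete graph on $\{x_1,\dots,x_n\}$ and the complete graph on $\{y_1,\dots,y_n\}$ as subgraphs, and across the two sides $\{x_i,y_j\}$ is an edge of $\bar G$ precisely when $i>j$, because $\{x_i,y_j\}\in E(G)$ iff $i\le j$.

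Next I propose the ordering
\[
x_1,x_2,\dots,x_n,\,y_1,y_2,\dots,y_n
\]
and verify that for each vertex $v$ the set of neighbors of $v$ in $\bar G$ that precede $v$ in this ordering forms a clique of $\bar G$. For $v=x_k$, the preceding neighbors are the set $\{x_1,\dots,x_{k-1}\}$, a subset of the $X$-clique. For $v=y_j$, the preceding vertices are $\{x_1,\dots,x_n\}\cup\{y_1,\dots,y_{j-1}\}$; among these, the neighbors of $y_j$ in $\bar G$ are exactly
\[
\{x_{j+1},\dots,x_n\}\cup\{y_1,\dots,y_{j-1}\}.
\]

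The only nontrivial check is that this last set is a clique in $\bar G$: any two $x$-vertices and any two $y$-vertices are joined by the respective side-cliques, and for indices $i<j<k$ the pair $\{y_i,x_k\}$ is an edge of $\bar G$ because $k>i$. The chain $i<j<k\Rightarrow k>i$ is the entire content of the argument, so there is no real obstacle. By Dirac's characterization of chordal graphs in terms of perfect elimination orderings (recalled at the beginning of Section~1), $\bar G$ is chordal, completing the proof.
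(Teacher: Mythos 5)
Your proof is correct, but it takes a genuinely different route from the paper. You establish chordality by exhibiting an explicit perfect elimination ordering $x_1,\ldots,x_n,y_1,\ldots,y_n$ and checking that the earlier neighbours of each vertex form a clique of $\bar G$; the only substantive check, that $\{x_{j+1},\ldots,x_n\}\cup\{y_1,\ldots,y_{j-1}\}$ is a clique, reduces to the implication $i<j<k\Rightarrow k>i$, and your description of $\bar G$ (two side-cliques, with $\{x_i,y_j\}\in E(\bar G)$ iff $i>j$) is accurate. The paper instead argues directly from the definition of chordality: any cycle of length at least $5$ meets one side of the bipartition in three vertices and hence has a chord inside that side-clique, and the remaining case of an alternating $4$-cycle $(x_i,x_j,y_k,y_\ell)$ is settled by a short case distinction on whether $i<j$ or $j<i$. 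Your argument buys a concrete simplicial ordering (and with it, implicitly, the clique structure of $\bar G$) and avoids the case analysis on cycles; the paper's argument needs no appeal to Dirac's equivalence and works straight from the chord condition. Both are elementary and of comparable length, and since the paper recalls Dirac's characterization at the start of Section~1, your appeal to it is legitimate within the paper's framework.
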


\begin{proof}
Let $X = \{x_1, \ldots, x_n\}$
and $Y = \{y_1, \ldots, y_n\}$.
Let $C$ be a cycle of $\bar{G}$ of length at least $5$.
Then $C$ contains either three vertices belonging to $X$
or three vertices belonging to $Y$.
Since $\{x_i, x_j\}$ and $\{y_i, y_j\}$ are edges of
$\bar{G}$ for all $i \neq j$,
it follows that $C$ possesses a chord.

Now, let $C = (a, b, c, d)$ be a cycle of $\bar{G}$
of length $4$.  If $a, c \in X$ or $b, d \in X$
or $a, c \in Y$ or $b, d \in Y$, then $c$ possesses a chord.
Suppose that
$a \in X$, $c \in Y$, $b \in X$ and $d \in Y$,
say, $C = (x_i, x_j, y_k, y_\ell)$.
Then $k < j$ and $\ell < i$.
If $j < i$, then $k < i$.  Thus $\{x_i, y_k\}$
is a chord of $C$.
If $i < j$, then $\ell < j$.  Thus $\{x_j, y_\ell\}$
is a chord of $C$.
Hence $\bar{G}$ is chordal, as desired.
\end{proof}

Proposition~\ref{betticm} yields

\begin{Corollary}
\label{cmtype}
Let $G$ be a closed graph with Cohen--Macaulay binomial edge ideal, and assume that $F_1,\ldots, F_r$ are the facets of $\Delta(G)$ with $k_i=|F_i|$ for $i=1,\ldots,r$. Then the Cohen--Macaulay type of $S/J_G$ is equal to $\prod_{i=1}^r(k_i-1)$.
In particular, $S/J_G$ is Gorenstein if and only if $G$ is a path graph.
\end{Corollary}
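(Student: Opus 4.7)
The plan is to apply Proposition~\ref{betticm} to transfer the Cohen--Macaulay type computation from $J_G$ to $\ini_<(J_G)$, then exploit the product structure that comes out of the proof of that proposition to reduce to the case of a single clique, where the type is a classical determinantal invariant.

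First, observe that the Cohen--Macaulay type of $S/J_G$ equals its last total graded Betti number. By Proposition~\ref{betticm} the entire graded Betti table of $S/J_G$ agrees with that of $S/\ini_<(J_G)$, so it is enough to compute the type of the latter. By Theorem~\ref{classification}(e) the facets $F_1,\ldots,F_r$ of $\Delta(G)$ are intervals $[a_i,a_{i+1}]$ overlapping only at their endpoints, so the generating sets $M_i=\ini_<(J_{F_i})$ appearing in the proof of Proposition~\ref{betticm} involve pairwise disjoint sets of variables. The same proof then produces the factorization
\[
B_{S/J_G}(s,t)=\prod_{i=1}^{r}B_{S/J_{F_i}}(s,t),
\]
which upon passing to total Betti numbers shows that the Cohen--Macaulay type of $S/J_G$ is the product of the Cohen--Macaulay types of the $S/J_{F_i}$.

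Each $F_i$ is a complete graph on $k_i$ vertices, so $J_{F_i}$ is the ideal of $2$-minors of a generic $2\times k_i$ matrix. The Cohen--Macaulay type of this classical determinantal ring is $k_i-1$; this can be read off from the Eagon--Northcott resolution, whose last free module for a $2\times k$ matrix has rank $k-1$. Multiplying over all facets yields
\[
\text{type}(S/J_G)=\prod_{i=1}^{r}(k_i-1).
\]
For the Gorenstein criterion, $S/J_G$ is Gorenstein iff this product equals $1$, iff every $k_i=2$, iff every facet of $\Delta(G)$ is an edge; by the interval description in Theorem~\ref{characterization} this happens precisely when $G$ is a path graph.

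The one non-formal input is the Cohen--Macaulay type $k_i-1$ of the $2\times k_i$ determinantal ring, which I would either cite from standard references on the Eagon--Northcott complex or derive directly: Lemma~\ref{linear} and Fr\"oberg's theorem give a $2$-linear resolution for $\ini_<(J_{F_i})$, and hence for $J_{F_i}$ itself, so the graded Betti numbers are determined by the Hilbert series; the classical fact that the $2\times k_i$ determinantal ring has multiplicity $k_i$ and the shape of the $h$-vector then pin down the last Betti number as $k_i-1$. This is the only place where a genuinely new computation enters the argument; everything else is bookkeeping on top of Proposition~\ref{betticm} and Theorem~\ref{classification}.
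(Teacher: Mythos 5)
Your proposal is correct and follows essentially the same route as the paper: invoke Proposition~\ref{betticm} (and the Betti-polynomial factorization established in its proof) to reduce to a single clique, then read off the type $k_i-1$ from the last term of the Eagon--Northcott resolution of the $2\times k_i$ determinantal ideal. The only difference is that you spell out the multiplicativity of the last total Betti number and the Gorenstein equivalence explicitly, which the paper leaves implicit.
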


\begin{proof} Due to Proposition~\ref{betticm} it suffices to show that if $G$ is a clique on $[n]$ (with $n\geq 2$), then the Cohen--Macaulay type of $S/J_G$ is equal to $n-1$. In this particular case, $J_G$ is the ideal of $2$-minors of a $2\times n$-matrix whose resolution is given by the Eagon--Northcott complex. The type of $S/J_G$ is the last Betti number in the resolution, which is $n-1$.
\end{proof}

Let $G$ be a closed graph with Cohen--Macaulay binomial edge ideal, and assume that $F_1=[a_1,a_2],\ldots, F_r=[a_r,a_{r+1}]$, where $1=a_1< a_2< 
\cdots < a_r < a_{r+1}=n,$
 are the facets of $\Delta(G)$ and $k_i=|F_i|$ for $i=1,\ldots,r$. By using the well-known fact that $S/J_G$ and $S/\ini(J_G)$ have the same Hilbert series, one easily gets the Hilbert series of $S/J_G,$
 \[
H_{S/J_G}(t)= \frac{\prod_{i=1}^r[(k_i-1)t+1]}{(1-t)^{n+1}}.
\]
In particular, the multiplicity of $S/J_G$ is $e(S/J_G)=k_1\cdots k_r$ and the $a$-invariant is $a(S/J_G)=r-n-1.$

By using the associativity formula for multiplicities we obtain a different expression for the multiplicity as the one given above. This will be a consequence of

\begin{Proposition}
\label{primemult}
$P_S(G)$ is a minimal prime of $J_G$ if and only if $S$ is empty or of the form $S=\{a_{j_1},\ldots,a_{j_s}\}$ for some
$2\leq j_1< j_2<\cdots <j_s\leq r$ such that $a_{j_{q+1}}-a_{j_q}\geq 2$ for all $1\leq q\leq s-1.$

In this case, the multiplicity of $S/P_S(G)$ is
\[
e(S/P_S(G)) =(a_{j_1}-1)(a_{j_2}-a_{j_1}-1)\cdots (a_{j_s}-a_{j_{s-1}}-1)(n-a_{j_s}).
\]
\end{Proposition}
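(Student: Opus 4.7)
The plan is to first identify which subsets $S$ give minimal $P_S(G)$, exploiting the interval-facet structure from Theorem~\ref{classification}, then to read off the components of $G_{[n]\setminus S}$ in order to pin down the gap condition, and finally to compute the multiplicity by factoring $S/P_S(G)$ as a tensor product over those components.

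First I would note that any vertex $i\in[n]\setminus\{a_2,\ldots,a_r\}$ lies in a unique facet $F_j=[a_j,a_{j+1}]$, so its neighbors in $G$ form a subset of the clique $F_j\setminus\{i\}$. Hence in any induced subgraph $G_{[n]\setminus T}$ with $i\notin T$, the neighbors of $i$ are still contained in a clique, and removing $i$ cannot increase the number of connected components. Taking $T=S\setminus\{i\}$ and using the minimality criterion recalled in Section~1 (each $i\in S$ must satisfy $c(S\setminus\{i\})<c(S)$), we see that every $i\in S$ with $P_S(G)$ minimal must lie in $\{a_2,\ldots,a_r\}$, so $S=\{a_{j_1},\ldots,a_{j_s}\}$ with $2\leq j_1<\cdots<j_s\leq r$.

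Next, since $F_i\sect F_{i+1}=\{a_{i+1}\}$ and non-consecutive facets are disjoint, deleting $S$ breaks the facet chain at positions $j_1,\ldots,j_s$, and the candidate components of $G_{[n]\setminus S}$ are the intervals $[1,a_{j_1}-1]$, $[a_{j_q}+1,a_{j_{q+1}}-1]$ for $q=1,\ldots,s-1$, and $[a_{j_s}+1,n]$. A middle interval degenerates to $\emptyset$ precisely when $a_{j_{q+1}}-a_{j_q}=1$, i.e.\ when $F_{j_q}=\{a_{j_q},a_{j_{q+1}}\}$ has only two vertices; in that case the neighbors of $a_{j_q}$ in $G_{[n]\setminus(S\setminus\{a_{j_q}\})}$ lie entirely in $F_{j_q-1}$, a clique, so $a_{j_q}$ fails to be a cut vertex and $P_S(G)$ is not minimal. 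Conversely, if $a_{j_{q+1}}-a_{j_q}\geq 2$ for every $q$, then $a_{j_q}-1$ and $a_{j_q}+1$ are both present in $G_{[n]\setminus(S\setminus\{a_{j_q}\})}$ and sit in different intervals separated by $a_{j_q}$, so $a_{j_q}$ is a genuine cut vertex and $P_S(G)$ is minimal. This establishes the first assertion.

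For the multiplicity, $S/P_S(G)$ is a tensor product over $K$ of the rings $K[x_j,y_j\:j\in V(G_k)]/J_{\tilde{G}_k}$, one for each connected component $G_k$ of $G_{[n]\setminus S}$. Since $\tilde{G}_k$ is the complete graph on $m_k:=|V(G_k)|$ vertices, the factor is the coordinate ring of the Segre cone over $\PP^{1}\times\PP^{m_k-1}$, with multiplicity $m_k$ (and trivially $1$ when $m_k=1$). As Hilbert series multiply under tensor product of graded $K$-algebras, $e(S/P_S(G))=\prod_k m_k$, and substituting the interval sizes from the previous paragraph yields exactly $(a_{j_1}-1)(a_{j_2}-a_{j_1}-1)\cdots(a_{j_s}-a_{j_{s-1}}-1)(n-a_{j_s})$. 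I expect the most delicate step to be the cut-vertex analysis in the middle paragraph, where it is essential to observe that the single combinatorial degeneracy $a_{j_{q+1}}=a_{j_q}+1$ collapses an entire facet and is exactly what destroys the minimality of $P_S(G)$.
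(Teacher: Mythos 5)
Your proposal is correct and follows essentially the same route as the paper: it shows $S\subseteq\{a_2,\ldots,a_r\}$ by observing that the neighborhood of any non-intersection vertex lies in a single clique, rules out the degenerate two-vertex facet case $a_{j_{q+1}}=a_{j_q}+1$ via the cut-vertex criterion $c(S\setminus\{i\})<c(S)$, and computes the multiplicity as the product of the clique sizes of the components of $G_{[n]\setminus S}$. Your write-up is somewhat more explicit than the paper's (which leaves the multiplicity computation and the converse cut-vertex check largely to the reader), but the underlying argument is the same.
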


\begin{proof} For any $s,$ if $S=\{a_{j_1},\ldots,a_{j_s}\}$ with $1\leq j_1< j_2<\cdots
<j_s\leq r-1$ such that $a_{j_{q+1}}-a_{j_q}\geq 2$ for all $1\leq q\leq s-1,$ the number of the connected components of the
restriction $G_{[n]\setminus S}$ of $G$ is $s+1.$ This implies that for such $S$, $P_S(G)$ is a minimal prime ideal of $J_G$.

Conversely,  let $S\neq \emptyset$, $S\subset [n],$ such that $P_S(G)$ is a minimal prime of $G.$ In the first place we claim that
$S$ is contained in $\{a_2,\ldots,a_r\}.$ Indeed, let us suppose that there exists $j\in S\setminus \{a_2,\ldots,a_r\}$, and let
$H_1,\ldots,H_t$ be the connected components of $G_{[n]\setminus S}$. Since $P_S(G)$ is a minimal prime, we have
$c(S\setminus\{j\})< c(S)$. This implies that there exists some integers $a\neq b$ such that $j$ is connected to $H_a$ and $H_b.$
Let $u\in V(H_a)$ and $v\in V(H_b)$ such that $\{u,j\}$ and $\{v,j\}$ are edges of $G.$ Then $u,v$, and $j$ belong to the same clique
of $G$, thus $\{u,v\}$ is an edge of $G$ and $H_a, H_b$ are connected, which is impossible. Consequently, $S$ is a subset of
$\{a_2,\ldots,a_r\}.$ Let $S=\{a_{j_1},\ldots,a_{j_s}\}$ with $2\leq j_1< j_2<\cdots <j_s\leq r$ and assume that there exists
$1\leq q\leq s-1$ such that $a_{j_{q+1}}=a_{j_q}+1.$ This means that $F_{j_q}=\{a_{j_q}, a_{j_q}+1\}.$ In this case it is easy to
check that $c(S\setminus\{a_{j_q}\})=c(S)$, which leads to a contradiction with  the minimality of $P_S(G)$.

The formula for the multiplicity follows easily if we recall that the multiplicity of $J_C$ is $m$ if $C$ is a clique with $m$ vertices.
\end{proof}

By comparing the two formulas for the multiplicity of $S/J_G$, we  get  the following

\begin{Corollary}
\label{combformula}
Let $b_1,\ldots,b_r\geq 1$ be some integers.
Then
\[
(b_1+1)\cdots (b_r+1)=1+\sum_{i=1}^r b_i+
\]
\[
+ \sum_{s=1}^{r-1}\sum_{1\leq j_1<\cdots < j_s\leq r-1}\left[(b_1+\cdots + b_{j_1})\prod_{i=1}^{s-1}(b_{j_i+1}+\cdots +b_{j_{i+1}}-1)(b_{j_s+1}+\cdots + b_r)\right].
\]
In particular, we have the following identity
\[
2^r=\sum_{s=0}^{\left\lfloor \frac{r}{2}\right\rfloor}\sum_{(x_1,\ldots,x_{s+1}) \in P(r-s+1|s+1)}x_1\cdots x_{s+1},
\]
where $P(r-s+1|s+1)$ stands for the set of all partitions of $r-s+1$ with $s+1$ parts.

\end{Corollary}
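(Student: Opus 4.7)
The plan is to derive both identities by constructing, for arbitrary integers $b_1,\ldots,b_r \geq 1$, a connected closed graph $G$ whose binomial edge ideal is Cohen--Macaulay, and then computing $e(S/J_G)$ in two different ways. Equating the two expressions will produce the first identity, and the second will follow by specialization.

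Concretely, given $b_1,\ldots,b_r \geq 1$, I would set $a_1 = 1$ and $a_{i+1}=a_i+b_i$, put $n = a_{r+1} = 1 + b_1+\cdots+b_r$, and let $G$ be the closed graph on $[n]$ whose clique complex has facets $F_i = [a_i,a_{i+1}]$, so that $k_i := |F_i| = b_i+1$. By Theorem~\ref{classification}, $J_G$ is Cohen--Macaulay, and the Hilbert series formula stated just before Proposition~\ref{primemult} gives the first expression
\[
e(S/J_G) = k_1\cdots k_r = (b_1+1)\cdots(b_r+1).
\]

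For the second expression I would invoke the associativity of multiplicities: since $J_G$ is Cohen--Macaulay (hence equidimensional) and binomial edge ideals are radical,
\[
e(S/J_G) = \sum_{P \in \Min(J_G)} e(S/P).
\]
Proposition~\ref{primemult} enumerates the minimal primes $P_S(G)$ together with their multiplicities. The empty set contributes $e(S/P_\emptyset(G)) = n = 1 + \sum_i b_i$, accounting for the first two terms on the right-hand side of the identity. For a nonempty $S = \{a_{j_1},\ldots,a_{j_s}\}$ with $2\leq j_1<\cdots<j_s\leq r$, the change of index $j_i \mapsto j_i - 1 \in [1,r-1]$ together with the substitutions $a_{j_1}-1 = b_1+\cdots+b_{j_1}$, $a_{j_{q+1}}-a_{j_q}-1 = b_{j_q+1}+\cdots+b_{j_{q+1}}-1$, and $n - a_{j_s} = b_{j_s+1}+\cdots+b_r$ transforms $e(S/P_S(G))$ into precisely the bracketed summand appearing on the right-hand side of the claimed identity.

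One subtlety to address is that Proposition~\ref{primemult} restricts the sum to sequences with $a_{j_{q+1}}-a_{j_q}\geq 2$, whereas the corollary as stated has no such constraint on $1\leq j_1<\cdots<j_s\leq r-1$. This is harmless: a violation of the constraint means $j_{q+1}=j_q+1$ with $b_{j_q+1} = 1$, which forces the factor $b_{j_q+1}+\cdots+b_{j_{q+1}}-1$ to vanish, so such extra terms contribute nothing. For the second identity, I would specialize $b_1=\cdots=b_r=1$, so that the left-hand side becomes $2^r$, and then reparametrize via
\[
x_1 = j_1,\qquad x_i = j_i - j_{i-1}-1 \;\; (2\leq i\leq s),\qquad x_{s+1} = r - j_s,
\]
each of which is a positive integer (the middle constraints $x_i \geq 1$ being exactly the nonvanishing condition). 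A direct check gives $x_1+\cdots+x_{s+1} = r-s+1$, so $(x_1,\ldots,x_{s+1})$ runs over compositions of $r-s+1$ into $s+1$ positive parts, with the range $0\leq s\leq \lfloor r/2\rfloor$ being the necessary and sufficient condition for such compositions to exist.

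The only real obstacle is the bookkeeping between the three parametrizations (subsets $S$, the shifted index tuples $(j_1,\ldots,j_s)$, and the compositions $(x_1,\ldots,x_{s+1})$) and the observation that vanishing summands let us drop the $\geq 2$ constraint; once that is set up carefully, the corollary follows by reading off the two expressions for $e(S/J_G)$ and specializing.
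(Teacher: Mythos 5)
Your proposal is correct and follows exactly the paper's (very terse) argument: compare the multiplicity $e(S/J_G)=k_1\cdots k_r$ read off from the Hilbert series with the sum of the multiplicities of the minimal primes given by Proposition~\ref{primemult}, via the associativity formula for multiplicities. The reindexing, the observation that the terms violating $a_{j_{q+1}}-a_{j_q}\geq 2$ vanish, and the passage to compositions of $r-s+1$ are precisely the bookkeeping the paper leaves implicit; note only that your (correct) derivation yields a sum over \emph{compositions}, so the paper's word ``partitions'' in $P(r-s+1|s+1)$ must be read as ordered tuples, as the notation $(x_1,\ldots,x_{s+1})$ suggests.
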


\bigskip
In Proposition~\ref{betticm} we have seen that  for a closed graph $G$,  whose  binomial edge ideal $J_G$ is Cohen--Macaulay,  the graded Betti numbers of $J_G$ and $\ini_<(J_G)$ coincide. Computational evidence indicates that the graded Betti numbers of $J_G$ and $\ini_<(J_G)$ coincide for all closed graphs. More generally, we conjecture that if $G$ is a chordal graph whose clique complex $\Delta(G)$ has a leaf order $F_1,\ldots,F_r$ such that   $F_{i-1}$ is the unique branch of $F_i$ for $i=2,\ldots, r$, then $J_G$ and $\ini_<(J_G)$ have the same graded Betti numbers.

We call  chordal graphs with the above property on the leaf order {\em  chain of cliques}. Each closed graphs is a chain of cliques  as we have seen in Theorem~\ref{characterization}.  The converse is not true, as the following example shows:

\begin{figure}[hbt]
\begin{center}
\psset{unit=1.5cm}
\begin{pspicture}(2.5,0.5)(7,1.5)
\pspolygon(4.7,0.3)(3.7,0.3)(4.0,1.0)(4.7,1.3)(5.4,1.0)(5.7,0.3)(4.7,0.3)
\psline(4.7,0.3)(4.0,1.0)
\psline(4.7,0.3)(4.7,1.3)
\psline(4.7,0.3)(5.4,1.0)
\rput(4.7,0.3){$\bullet$}
\rput(4.0,1.0){$\bullet$}
\rput(4.7,1.3){$\bullet$}
\rput(5.4,1.0){$\bullet$}
\rput(3.7,0.3){$\bullet$}
\rput(5.7,0.3){$\bullet$}
\end{pspicture}
\end{center}
\caption{}\label{Fig1}
\end{figure}

Based on explicit calculations and general arguments in special cases  we believe that in general for all graphs $G$ the extremal Betti numbers (see \cite{BCP}) of $J_G$ and $\ini_<(J_G)$ coincide.

{}


\begin{thebibliography}{}

\bibitem{BCP} D. Bayer, H. Charalambous, S. Popescu, Extremal Betti numbers and
applications to monomial ideals, {\em J. Algebra} \textbf{221} (1999), 497--512.
\bibitem{D} G. A. Dirac,  On rigid circuit graphs,  {\em Abh. Math. Sem. Univ. Hamburg}
{\bf 38} (1961), 71--76.
\bibitem{F} R. Fr\"oberg,  On Stanley--Reisner rings, {\em in} Belcerzyk, L. et al. (eds) ``Topics in algebra",  Polish Scientific Publishers (1990)
\bibitem{HH10} J. Herzog and T. Hibi, Monomial Ideals, Graduate Texts in Mathematics \textbf{260}, Springer, 2010.
\bibitem{HHHKR} J. Herzog, T. Hibi, F. Hreinsdotir, T. Kahle, J, Rauh, Binomial edge ideals and conditional independence statements,
{\em Adv. Appl. Math}. \textbf{45} (2010), 317--333.
\bibitem{Oh} M. Ohtani, Graphs and Ideals generated by some $2$-minors, {\em Commun Algebra} {\bf 39}(3) (2011), 905--917.
\bibitem{V} R. Villarreal, Cohen--Macaulay graphs, {\em Manuscripta Math.} {\bf 66} (1990), 277--293.
\end{thebibliography}
\end{document}